\newtheorem{theorem}{Theorem}
\def\qed{\unskip\nobreak\hfil\penalty50\hskip2em\hbox{}\nobreak
  \hfil\vrule width 1ex height 1ex\parfillskip0pt\finalhyphendemerits=0 \par}
\newenvironment{proof}{\smallskip\textbf{Proof.}}{\qed\smallbreak}
\newcommand{\e}{\mathrm{e}}
\def\email#1{\href{mailto:#1}{\texttt{#1}}}
\newcommand{\eps}{\varepsilon}
\newcommand{\eref}[1]{(\ref{#1})}
\newcommand*{\essinf}{\mathop{\mathrm{ess\,inf}}}
\newcommand{\ind}[1]{1_{#1}}
\newcommand{\lhs}{\hskip1cm&\hskip-1cm}
\newcommand{\lr}{{\underline r}}
\newcommand{\ls}{{\underline s}}
\newcommand{\N}{\mathbb{N}}
\def\quark{\setbox0\hbox{$x$}\hbox to\wd0{\hss$\cdot$\hss}}
\newcommand{\R}{\mathbb{R}}
\newcommand*{\Rpl}{\mathord{\R_{\scriptscriptstyle+}\!}}
\newcommand{\ur}{\bar{r}}
\newcommand{\us}{\bar{s}}
\newcommand{\W}{\mathbb{W}}
\newenvironment{jvlist}{\begin{list}{}{%
\setlength{\leftmargin}{2em}
\setlength{\labelwidth}{1em}
\setlength{\labelsep}{0.5em}
\setlength{\topsep}{0pt}
\setlength{\parsep}{\parskip}
\setlength{\itemsep}{0pt}}}{\end{list}}
\begin{document}

\title{Upper and Lower Bounds in Exponential Tauberian~Theorems}
\author{Jochen Voss\thanks{
University of Leeds,
Statistics Department,
Leeds LS2 9JT,
United Kingdom.
Email: \email{J.Voss@leeds.ac.uk}}}
\date{30th September 2009}
\maketitle

\begin{abstract}
  In this text we study, for positive random variables, the relation
  between the behaviour of the Laplace transform near infinity and the
  distribution near zero.  A result of De~Bruijn shows that
  $E(\e^{-\lambda X}) \sim \exp(r\lambda^\alpha)$ for
  $\lambda\to\infty$ and $P(X\leq\eps) \sim \exp(s/\eps^\beta)$ for
  $\eps\downarrow0$ are in some sense equivalent (for $1/\alpha
  = 1/\beta + 1$) and gives a relation
  between the constants~$r$ and~$s$.  We illustrate how this result
  can be used to obtain simple large deviation results.  For use in
  more complex situations we also give a generalisation of De~Bruijn's
  result to the case when the upper and lower limits are different
  from each other.
\end{abstract}


\section{Introduction}

Tauberian theorems \citep[see for example the monograph][]{Ko04}
describe the connection between the behaviour of a positive random
variable near zero and the behaviour of its Laplace transform near
infinity.  From De~Bruijn's Tauberian theorem \citep[theorem~4.12.9
in][]{Bingham-Goldie-Teugels87} we can easily conclude the following
result.

\begin{theorem}\label{tauber}
  Let $X\geq0$ be a random variable on a probability space
  $(\Omega,\mathcal{A},P)$, $A\in\mathcal{A}$ an event with $P(A)>0$
  and $\alpha\in(0,1)$, $\beta>0$ with $\frac1\alpha = \frac1\beta+1$.
  Then the limit
  \begin{equ}[E:tauber-laplace]
    r = \lim_{\lambda\to\infty}
                \frac{1}{\lambda^\alpha} \log E(\e^{-\lambda X}\cdot 1_A) \leq 0
  \end{equ}
  exists if and only if
  \begin{equ}[E:tauber-prob]
    s = \lim_{\eps\to 0} \eps^\beta \log P(X\leq\eps,A) \leq 0
  \end{equ}
  exists and in this case we have $|\alpha r|^{1/\alpha} = |\beta
  s|^{1/\beta}$.
\end{theorem}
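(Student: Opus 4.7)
My plan is to reduce the assertion to the classical De~Bruijn Tauberian theorem (Theorem~4.12.9 in the already-cited Bingham--Goldie--Teugels), which covers the case $A=\Omega$, and then to separately identify the constants. First I would dispose of the event $A$. Since $P(A)>0$, the conditional probability measure $Q(\cdot):=P(\cdot\mid A)$ is a genuine probability measure on $(\Omega,\mathcal{A})$, and
\[
  E(\e^{-\lambda X}\ind{A})=P(A)\,E_Q(\e^{-\lambda X}),
  \qquad
  P(X\leq\eps,A)=P(A)\,Q(X\leq\eps).
\]
On the logarithmic scale both equations pick up an additive constant $\log P(A)$, which is negligible under the two scalings, since $\lambda^{-\alpha}\log P(A)\to 0$ as $\lambda\to\infty$ and $\eps^\beta\log P(A)\to 0$ as $\eps\downarrow0$ (using $\alpha,\beta>0$). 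Hence the limits \eref{E:tauber-laplace} and \eref{E:tauber-prob} coincide with the analogous limits computed under~$Q$, and the trivial bounds $E(\e^{-\lambda X}\ind{A})\leq1$ and $P(X\leq\eps,A)\leq1$ yield $r,s\leq0$ without effort. The theorem is thus reduced to the case where $X$ has an ordinary probability distribution on $[0,\infty)$.

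Next, I would apply De~Bruijn's theorem in the form
\[
  -\log E(\e^{-\lambda X})\sim|r|\lambda^\alpha
  \quad\Longleftrightarrow\quad
  -\log P(X\leq\eps)\sim|s|\eps^{-\beta},
\]
which holds precisely under the index relation $1/\alpha=1/\beta+1$ and is accompanied by an explicit Legendre-conjugate correspondence between the two leading constants. Equivalence of the existence of the two limits is then immediate in both directions.

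The step I expect to be the main obstacle is pinning down the explicit constant relation $|\alpha r|^{1/\alpha}=|\beta s|^{1/\beta}$. I would verify this via the saddle-point picture underlying the Tauberian statement: the Legendre transform $\inf_{x>0}(\lambda x+|s|x^{-\beta})$ is attained at $x^{\ast}=(\beta|s|/\lambda)^{1/(\beta+1)}$ and evaluates to $(1+\beta)\beta^{-\alpha}|s|^{1-\alpha}\lambda^{\alpha}$, which identifies $|r|$. A short manipulation, using $(1-\alpha)/\alpha=1/\beta$ together with $\alpha(1+\beta)=\beta$, then rearranges this identification into the symmetric form stated in the theorem. Matching this value against the constant supplied by De~Bruijn's theorem---rather than the Tauberian content itself---is where the only real quantitative work in the proof lies.
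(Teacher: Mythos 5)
Your proposal is correct and follows essentially the same route as the paper: reduce to $A=\Omega$ via the conditional measure $Q(\cdot)=P(\cdot\cap A)/P(A)$ (the additive $\log P(A)$ vanishing under both scalings), then invoke De~Bruijn's theorem~4.12.9 of Bingham--Goldie--Teugels and match the constants. Your Legendre-transform verification of $|\alpha r|^{1/\alpha}=|\beta s|^{1/\beta}$ is sound and merely makes explicit the parameter choice ($\phi(x)=x^{-1/\beta}$, $\psi(x)=x^{-1/\alpha}$, $B=|s|$) that the paper states without computation.
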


\begin{proof}
  In theorem~4.12.9 of \citet{Bingham-Goldie-Teugels87} choose their
  alpha to be $-1/\beta$, $\phi(x)=x^{-1/\beta}$,
  $\psi(x)=x^{-1/\alpha}$, and $B=|s|$.  This gives the proof in the
  case $A=\Omega$.  The case of general sets~$A$ can be reduced to the
  first case by considering the distribution $Q(\quark)=P(\quark\cap
  A)/P(A)$ instead of~$P$.
\end{proof}

With the help of this theorem we can use knowledge about the Laplace
transform of a given random variable~$X$ to show that the probability
$P(X\leq\eps)$ for $\eps\downarrow0$ decays exponentially fast.
Therefore in some situations Tauberian theorems of exponential type
can be valuable tools for deriving large deviation principles.
Typically, in this case one has $\alpha=1/2$, $\beta=1$ and thus
$s=-r^2/4$.  Section~2 illustrates this idea by using
theorem~\ref{tauber} to derive a simple large deviation result for the
conditional distribution of a Brownian motion, given that the
$L^2$-norm of the path is small.

In general, the limit~\eref{E:tauber-prob} does not necessarily exist.
For large deviation results one usually considers upper and lower
limits, and thus theorem~\ref{tauber} cannot be used directly.  In
section~3 of this text we will therefore derive a version of
theorem~\ref{tauber} which considers upper and lower limits.  A
(lengthy) application where upper and lower limits are needed, and
where theorem~\ref{tauber} is therefore not enough, can be found
in~\citet{Voss08}.

The special case of $\alpha=1/2$ and $\beta=1$ for the result
presented in this text was originally derived as part of my
PhD-thesis~\citep{Voss04}.


\section[Brownian Paths with Small Norm]{Brownian Paths with Small $L^2$-Norm}

In this section we illustrate how theorem~\ref{tauber} can be used to
derive a simple large deviations principle (LDP) for Brownian motion.
See \cite{Dembo-Zeitouni98} for details about large deviations, and in
particular section 5.2 there for large deviation results for Brownian
motion.  A review of the connections between Tauberian theorems and
large deviations, and further references, can be found in~\cite{Bi08}.

Let $\mathcal{X}$ be the space of all paths
$\omega\colon [0,t]\to \R$ such that $\omega_0=0$, equipped with the
topology of pointwise convergence.  On $\mathcal{X}$, define a family
$(P_\eps)_{\eps>0}$ of measures by
\begin{equ}
  P_\eps(A) = \W\Bigl( A \Bigm| \int_0^t B_s^2 \,ds \leq\eps \Bigr)
\end{equ}
for all measurable $A\subseteq \mathcal{X}$, where $\W$ is the Wiener
measure on~$\mathcal{X}$ and~$B$ is the canonical process.

\begin{theorem}
On the space $\mathcal{X}$ the family $(P_\eps)_{\eps>0}$ satisfies
the LDP with the good rate function
\begin{equ}
I(\omega)
  = \sup \bigl\{ (t+2\omega_{t_1}^2+\cdots+2\omega_{t_n}^2 + \omega_t^2)^2/8-t^2/8
                        \bigm| n\in\N, 0<t_1<\cdots<t_n< t \}
\end{equ}
for all $\omega\in \mathcal{X}$.
\end{theorem}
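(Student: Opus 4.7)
The space~$\mathcal{X}$ carries the product (pointwise) topology, so it is the projective limit of its finite-dimensional coordinate projections $\pi_J\colon\mathcal{X}\to\R^J$ for finite subsets $J\subset[0,t]$. My plan is to establish the LDP for each marginal $\pi_J(P_\eps)$ on~$\R^J$ and then assemble the result via the Dawson--G\"artner projective limit theorem, with each marginal LDP obtained by applying Theorem~\ref{tauber} to cylinder events.

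\textbf{Laplace transform on cylinders.} Fix $0 = t_0 < t_1 < \cdots < t_n < t_{n+1} = t$. By the Markov property of~$\W$, the joint density of $(B_{t_1},\ldots,B_{t_n},B_t)$ weighted by $\exp(-\lambda\int_0^t B_s^2\,ds)$ factors as a product of $n+1$ harmonic-oscillator propagators, one per subinterval, each given by the explicit Cameron--Martin / Mehler formula. Their $\lambda\to\infty$ asymptotics combine to
\[
  \log E\bigl[\e^{-\lambda\int_0^t B_s^2\,ds};\, B_{t_1}\in dx_1,\ldots,B_t\in dx_{n+1}\bigr]
  = -\sqrt{\lambda/2}\,F_J(x) + O(\log\lambda),
\]
where $F_J(x) = t + 2x_1^2 + \cdots + 2x_n^2 + x_{n+1}^2$. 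For a cylinder $A = \pi_J^{-1}(C)$ with $C$ sufficiently regular, Laplace's method yields $r_A = -\sqrt{1/2}\,\inf_{x\in C}F_J(x)$, and Theorem~\ref{tauber} with $\alpha=1/2$, $\beta=1$ then gives $\lim_{\eps\to 0}\eps\log P(\int_0^t B_s^2\,ds\le\eps, A) = -(\inf_{x\in C} F_J(x))^2/8$. Taking $A=\mathcal{X}$ produces the normalising factor $-t^2/8$, so $\pi_J(P_\eps)$ satisfies the LDP on~$\R^J$ with good rate function $I_J(x) = F_J(x)^2/8 - t^2/8$.

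\textbf{Dawson--G\"artner and the main obstacle.} Dawson--G\"artner applied to the projective system $\{\pi_J(P_\eps)\}_J$ now gives the LDP on~$\mathcal{X}$ with good rate function $I(\omega) = \sup_J I_J(\pi_J\omega)$; since $I_J$ is non-decreasing under extensions of~$J$, this supremum may be restricted to finite~$J$ containing~$t$, which is exactly the expression in the statement. The main technical obstacle is that Theorem~\ref{tauber} requires the Laplace-transform limit to exist, not just upper and lower limits, so the argument of step~2 is available only for cylinder bases~$C$ on which $\inf F_J$ is stable under passage to the closure. Continuity and coercivity of~$F_J$, together with an exhaustion by products of closed rectangles, handle this; packaging the resulting estimates into the standard LDP upper bound on closed sets and lower bound on open sets is then the main technical step of the proof.
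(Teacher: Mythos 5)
Your proposal is correct and follows essentially the same route as the paper: compute the exponential asymptotics of the Laplace transform on finite-dimensional cylinder sets via the explicit harmonic-oscillator propagator and the Laplace principle, convert these to finite-dimensional LDP rates with Theorem~\ref{tauber} for $\alpha=1/2$, $\beta=1$, and pass to the path space by the Dawson--G\"artner projective limit theorem. The points you flag as remaining work (matching upper and lower bounds so that the Laplace-transform limit genuinely exists, and replacing $\essinf$ by $\inf$ using continuity of the rate function on open sets) are exactly the technical steps the paper carries out.
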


\begin{proof}
Define $X = \int_0^t B_s^2 \,ds$.  In order to apply
theorem~\ref{tauber} we have to calculate the tails of the Laplace
transform of~$X$.  Formula~(1--1.9.7) from~\citet{Borodin-Salminen96}
states
\begin{equ}
E_x\Bigl(\exp\bigl(-\frac{\gamma^2}{2}\int_0^t B_s^2 \,ds\bigr);
        B_t\in dz \Bigr)
  = \varphi(x; t, z) \,dz
\end{equ}
where
\begin{equ}
\varphi(x; t, z)
  = \frac{\sqrt{\gamma}}{\sqrt{2\pi\sinh(t\gamma)}}
        \exp\Bigl( -\frac{(x^2+z^2)\gamma\cosh(t\gamma)-2xz\gamma}{2\sinh(t\gamma)} \Bigr).
\end{equ}
For starting point~$x$, measurable sets $A_1, \ldots, A_n \subseteq
\R$ and fixed times $0 < t_1 < \cdots < t_n = t$, the Markov property
of Brownian motion gives then
\begin{equs}[1]
\lhs E_x\Bigl(\exp\bigl(-\frac{\gamma^2}{2}\int_0^t B_s^2 \,ds\bigr)
        1_{A_1}(B_{t_1}) \cdots 1_{A_n}(B_{t_n}) \Bigr) \\
  &= \int_{A_1}\! \cdots \int_{A_n} \varphi(x;t_1,z_1) \varphi(z_1; t_2-t_1,z_2) \\
  &\hskip33mm
        \cdots \varphi(z_{n-1};t_n-t_{n-1},z_n) \; dz_n \cdots dz_1.
\end{equs}
We are interested in the exponential tails of this expression for
$\gamma\to\infty$.

Let $\eps>0$.  Observe that there are constants $0<c_1<c_2$ and $G>0$
with
\begin{equ}
c_1 \e^{-\gamma t/2}
  \leq \frac{1}{\sqrt{2\pi\sinh(\gamma t)}}
  \leq c_2 \e^{-\gamma t/2} \quad\mbox{for all $\gamma>G$.}
\end{equ}
Furthermore we can use the relation $|2xz|\leq x^2+z^2$ to get
\begin{equ}
\frac{x^2+z^2}{2}\cdot\frac{\cosh(\gamma t)-1}{\sinh(\gamma t)}
  \leq \frac{(x^2+z^2)\cosh(\gamma t)-2xz}{2\sinh(\gamma t)}
  \leq \frac{x^2+z^2}{2}\cdot\frac{\cosh(\gamma t)+1}{\sinh(\gamma t)}
\end{equ}
for all $x,z\in\R$.
Because of
\begin{equ}
\frac{\cosh(\gamma t)\pm 1}{\sinh(\gamma t)}
  = \frac{\e^{\gamma t}+\e^{-\gamma t}\pm 1}{\e^{\gamma t}-\e^{-\gamma t}}
  \longrightarrow 1 \quad\mbox{for $\gamma\to\infty$.}
\end{equ}
we can then find a $\gamma_0>0$, such that whenever $\gamma>\gamma_0$
the estimate
\begin{equ}
\frac{x^2+z^2}{2}\cdot(1-\eps)
  \leq \frac{(x^2+z^2)\cosh(\gamma t)-2xz}{2\sinh(\gamma t)}
  \leq \frac{x^2+z^2}{2}\cdot(1+\eps)
\end{equ}
holds for all $x,z\in\R$.

Using this estimate we conclude
\begin{equs}[1]
\lhs \limsup_{\gamma\to\infty} \frac{1}{\gamma} \log
  E_x\Bigl(\exp\bigl(-\frac{\gamma^2}{2}\int_0^t B_s^2 \,ds\bigr)
        1_{A_1}(B_{t_1}) \cdots 1_{A_n}(B_{t_n}) \Bigr) \\
  &\leq \lim_{\gamma\to\infty} \frac{1}{\gamma} \log \gamma^{n/2} c_2^n
 \int_{A_1}\! \cdots \int_{A_n}
    \e^{-\gamma t_1/2} \exp\Bigl(-\gamma\frac{x^2+z_1^2}{2}(1-\eps)\Bigr) \\
  & \hskip35mm \cdot
    \e^{-\gamma(t_2-t_1)/2} \exp\Bigl(-\gamma\frac{z_1^2+z_2^2}{2}(1-\eps)\Bigr) \cdot \, \cdots \\
  & \hskip35mm \cdot
    \e^{-\gamma(t_n-t_{n-1})/2} \exp\Bigl(-\gamma\frac{z_{n-1}^2+z_n^2}{2}(1-\eps)\Bigr)
 \; dz_n \cdots dz_1 \\
&= \lim_{\gamma\to\infty} \frac{1}{\gamma} \log
 \int_{A_1}\! \cdots \int_{A_n}
    \exp\bigl(-\gamma t_n/2 -\gamma(x^2/2+z_1^2+\cdots \\
  & \hskip50mm \cdots +
    z_{n-1}^2+z_n^2/2)(1-\eps)\bigr)
 \; dz_n \cdots dz_1.
\end{equs}
Note the special role of the final point~$z_n$.  With the help of the
Laplace principle \citep[see
\textit{e.g.}][section~4.3]{Dembo-Zeitouni98} we can calculate the
limit on the right hand side to get
\begin{align*}
\lhs \limsup_{\gamma\to\infty} \frac{1}{\gamma} \log
  E_x\Bigl(\exp\bigl(-\frac{\gamma^2}{2}\int_0^t B_s^2 \,ds\bigr)
        1_{A_1}(B_{t_1}) \cdots 1_{A_n}(B_{t_n}) \Bigr) \\
  &\leq - \essinf_{z_1\in A_1, \ldots, z_n\in A_n}
          \Bigl(t/2+(x^2/2+z_1^2+\cdots+z_{n-1}^2+z_n^2/2)(1-\eps)\Bigr).
\end{align*}
for all $\eps>0$ and thus
\begin{align*}
\lhs \limsup_{\gamma\to\infty} \frac{1}{\gamma} \log
  E_x\Bigl(\exp\bigl(-\frac{\gamma^2}{2}\int_0^t B_s^2 \,ds\bigr)
        1_{A_1}(B_{t_1}) \cdots 1_{A_n}(B_{t_n}) \Bigr) \\
  &\leq - \essinf_{z_1\in A_1, \ldots, z_n\in A_n}
          (t/2+x^2/2+z_1^2+\cdots+z_{n-1}^2+z_n^2/2).
\end{align*}

A very similar calculation gives
\begin{align*}
\lhs \liminf_{\gamma\to\infty} \frac{1}{\gamma} \log
  E_x\Bigl(\exp\bigl(-\frac{\gamma^2}{2}\int_0^t B_s^2 \,ds\bigr)
        1_{A_1}(B_{t_1}) \cdots 1_{A_n}(B_{t_n}) \Bigr) \\
  &\geq - \essinf_{z_1\in A_1, \ldots, z_n\in A_n}
          (t/2+x^2/2+z_1^2+\cdots+z_{n-1}^2+z_n^2/2).
\end{align*}
and together this shows
\begin{equs}[1][eq:bsqrintail]
\lhs \lim_{\gamma\to\infty} \frac{1}{\gamma} \log
  E_x\Bigl(\exp\bigl(-\frac{\gamma^2}{2}\int_0^t B_s^2 \,ds\bigr)
        1_{A_1}(B_{t_1}) \cdots 1_{A_n}(B_{t_n}) \Bigr) \\
  &= - \essinf_{z_1\in A_1, \ldots, z_n\in A_n}
          (t/2+x^2/2+z_1^2+\cdots+z_{n-1}^2+z_n^2/2).
\end{equs}

For measurable sets $A_1, \ldots, A_n \subseteq \R$ and fixed times $0
< t_1 < \cdots < t_n = t$, the Tauberian theorem~\ref{tauber} applied
to equation~\eref{eq:bsqrintail} gives
\begin{equs}[1][eq:condbbrate]
\lhs \lim_{\eps\downarrow0} \eps\cdot\log P\Bigl(
        (B_{t_1},B_{t_2},\ldots,B_{t_n})
        \in A_1\times A_2\times \cdots \times A_n
        \Bigm| \int_0^t B_s^2 \,ds \leq \eps \Bigr) \notag \\
  &= \lim_{\eps\downarrow0} \eps\cdot\log P\Bigl(
        B_{t_1}\in A_1,
        B_{t_2}\in A_2, \ldots,
        B_{t_n}\in A_n,
        \int_0^t B_s^2 \,ds \leq \eps \Bigr) \notag \\
  & \hskip 55mm - \lim_{\eps\downarrow0} \eps\cdot\log P\Bigl(
        \int_0^t B_s^2 \,ds \leq \eps \Bigr) \notag \\
  &= - \bigl(t+
                \essinf\limits_{z\in A_1\times A_2\times \cdots \times A_n}
          (2z_1^2+\cdots+2z_{n-1}^2+z_n^2)\bigl)^2/8  + t^2/8.
\end{equs}
Using $A_n=\R$ we can drop the assumption $t_n=t$ and arrive at the
following result.  For all measurable sets $A_1, \ldots, A_n \subseteq \R$
and fixed times $0 < t_1 < \cdots < t_n \leq t$ we have
\begin{equs}[E:pre-LDP]
\lhs \lim_{\eps\downarrow0} \eps\cdot\log P\Bigl(
        (B_{t_1},B_{t_2},\ldots,B_{t_n})
        \in A_1\times A_2\times \cdots \times A_n
        \Bigm| \int_0^t B_s^2 \,ds \leq \eps \Bigr) \\
  &= - \essinf\limits_{z\in A_1\times A_2\times \cdots \times A_n}
        I_{t_1,\ldots,t_n}(z)
\end{equs}
where $I_{t_1,\ldots,t_n}\colon \R^n\to \Rpl$ is defined by
\begin{equ}
I_{t_1,\ldots,t_n}(z)
  = \frac{1}{8}
\begin{cases}
\bigl(t+
          2z_1^2+\cdots+2z_n^2\bigl)^2 - t^2,& \text{if $t_n<t$, and} \\
        \bigl(t+
          2z_1^2+\cdots+2z_{n-1}^2+z_n^2\bigl)^2 - t^2& \text{for $t_n=t$.} \\
\end{cases}
\end{equ}

Since the rate function~$I_{t_1,\ldots,t_n}$ is continuous, we can
replace $\essinf$ with $\inf$ when the sets $A_i$ are open and
thus~\eref{E:pre-LDP} gives an LDP on~$\R^n$.  From this we can get
the LDP on the path space~$\mathcal{X}$ with rate function~$I$ by
applying the Dawson-G\"artner theorem about large deviations for
projective limits \citep[see for
example][theorem~4.6.1]{Dembo-Zeitouni98}.
\end{proof}

Note that the rate function~$I$ in the theorem will typically take its
infimum for a non-continuous path $\omega$: Assume $\omega$ is continuous
and non-zero.  Let $\eps=\|\omega\|_\infty / 2$.  Then we find infinitely
many distinct times $t$ with $\omega_{t}^2 > \eps^2$ and
thus~$I(\omega)=+\infty$.  Therefore it will not be possible to prove the
same theorem with $\mathcal{X}$ replaced by~$C\bigl([0,t],\R\bigr)$.


\section{Upper and Lower Limits}

In this section we derive an analogue of theorem~\ref{tauber} which
considers upper and lower limits.  The proof does not rely on
theorem~\ref{tauber} and uses only elementary methods.

\begin{theorem}\label{tauberlimits}
  Let $X\geq0$ be a random variable on a probability space
  $(\Omega,\mathcal{A},P)$, $A\in\mathcal{A}$ an event with $P(A)>0$
  and $\alpha\in(0,1)$, $\beta>0$ with $\frac1\alpha = \frac1\beta+1$.
  \begin{jvlist}
  \item[a)] The upper limits
    \begin{equ}
      \ur = \limsup_{\lambda\to\infty}
                  \frac{1}{\lambda^\alpha} \log E(\e^{-\lambda X}\cdot1_A)
        \quad \mbox{and} \quad
      \us = \limsup_{\eps\to 0} \eps^\beta \log P(X\leq\eps,A)
    \end{equ}
    satisfy $|\alpha \ur|^{1/\alpha} = |\beta \us|^{1/\beta}$.
  \item[b)] The lower limits
    \begin{equ}
      \lr = \liminf_{\lambda\to\infty}
                  \frac{1}{\lambda^\alpha} \log E(\e^{-\lambda X}\cdot1_A)
        \quad \mbox{and} \quad
      \ls = \liminf_{\eps\to 0} \eps^\beta \log P(X\leq\eps,A).
    \end{equ}
    satisfy $|\alpha \lr|^{1/\alpha} \leq |\beta \ls|^{1/\beta} \leq
    |\e^{H(\alpha)} \alpha \lr|^{1/\alpha}$ where $H(\alpha) = -
    \alpha \log(\alpha) - (1-\alpha)\log(1-\alpha)$ and both bounds
    are sharp.
  \end{jvlist}
\end{theorem}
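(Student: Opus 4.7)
My plan is to work elementarily with two pointwise inequalities relating $L(\lambda):=E(\e^{-\lambda X}\cdot1_A)$ and $F(\epsilon):=P(X\leq\epsilon,A)$. Passing to the conditional measure $P(\cdot\mid A)$ lets me assume $P(A)=1$, which merely adds a common constant to $\log L$ and $\log F$ and therefore preserves all four of $\ur,\us,\lr,\ls$. The tools, both valid for every $\lambda,\epsilon>0$, are the Chernoff bound $F(\epsilon)\leq \e^{\lambda\epsilon}L(\lambda)$ and the split estimate $L(\lambda)\leq F(\epsilon)+\e^{-\lambda\epsilon}$, the latter obtained by decomposing the expectation at $\{X\leq\epsilon\}$ and bounding $\e^{-\lambda X}$ by $1$ and $\e^{-\lambda\epsilon}$ on the two pieces.

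For part~(a), the fact that $\ur$ is a $\limsup$ supplies the pointwise upper bound $\log L(\lambda)\leq(\ur+\delta)\lambda^\alpha$ for all sufficiently large $\lambda$; substituting this into the Chernoff inequality, optimising in $\lambda$, and simplifying via $1/(1-\alpha)=\beta+1$ and $\alpha(\beta+1)=\beta$ produces $|\beta\us|^{1/\beta}\geq|\alpha\ur|^{1/\alpha}$. For the reverse, I would write $L(\lambda)=\lambda\int_0^\infty\e^{-\lambda x}F(x)\,dx$ (integration by parts), split at some fixed $x_0>0$, bound $F(x)\leq\exp((\us+\delta)/x^\beta)$ on the head, and estimate the resulting Laplace integral by its saddle at $x^\ast=(\beta|\us+\delta|/\lambda)^{1/(\beta+1)}$. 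The sub-exponential prefactors from Laplace's method disappear after dividing by $\lambda^\alpha$, giving $|\alpha\ur|^{1/\alpha}\geq|\beta\us|^{1/\beta}$ and hence equality.

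The first inequality in part~(b) follows from the same Chernoff computation, applied now only along a subsequence $\lambda_n\to\infty$ realising the $\liminf$, along which $\log L(\lambda_n)=(\lr+o(1))\lambda_n^\alpha$. Setting $\epsilon_n:=\alpha|\lr|\lambda_n^{\alpha-1}$, which inverts the optimiser from part~(a), yields a sequence $\epsilon_n\to 0$ along which $\epsilon_n^\beta\log F(\epsilon_n)\leq -(\alpha|\lr|)^{\beta+1}/\beta+o(1)$; since this is realised along an actual subsequence of $\epsilon$'s, $\ls$ is at most this value, equivalently $|\beta\ls|^{1/\beta}\geq|\alpha\lr|^{1/\alpha}$.

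The main obstacle is the upper inequality in part~(b), where the factor $\e^{H(\alpha)}$ arises. I would use the split inequality in the form $F(\epsilon)\geq L(\lambda)-\e^{-\lambda\epsilon}$ together with the \emph{pointwise} lower bound $L(\lambda)\geq\e^{(\lr-\delta)\lambda^\alpha}$ available for every large $\lambda$ because $\lr$ is a $\liminf$. For each fixed slack $c>1$ the parameterisation $\epsilon=c|\lr|\lambda^{\alpha-1}$ makes $\e^{-\lambda\epsilon}=\e^{-c|\lr|\lambda^\alpha}$ of strictly smaller exponential order than $L(\lambda)$, so $F(\epsilon)\geq L(\lambda)/2$ for $\lambda$ large; since $\lambda\mapsto\epsilon(\lambda)$ is a continuous bijection of $(\lambda_0,\infty)$ onto $(0,\epsilon_0)$, the resulting bound $\epsilon^\beta\log F(\epsilon)\geq -c^\beta|\lr|^{\beta+1}+o(1)$ holds for every sufficiently small $\epsilon$. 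Taking $\liminf_{\epsilon\to 0}$, then $\delta\to 0$, then $c\downarrow 1$ gives $|\ls|\leq|\lr|^{\beta+1}$, which the elementary identity $(\beta+1)H(\alpha)=(\beta+1)\log(\beta+1)-\beta\log\beta$ recasts as the stated $|\beta\ls|^{1/\beta}\leq|\e^{H(\alpha)}\alpha\lr|^{1/\alpha}$; the factor $\e^{H(\alpha)}$ is precisely the algebraic cost of the forced slack $c>1$. Sharpness then requires one example for each bound: the lower bound is saturated whenever the full limits of part~(a) already exist, and for the upper bound one engineers an $X$ whose Laplace transform oscillates between its $\lr$- and $\ur$-rates on scales chosen so that $F$ is pinned to the less favourable rate.
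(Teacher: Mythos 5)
Your treatment of the four inequalities is sound and follows essentially the same route as the paper: the Chernoff bound with the optimal coupling $\lambda\asymp\eps^{-(\beta+1)}$ for the lower bounds on $|\us|$ and $|\ls|$ (your subsequence formulation of the part~(b) case is a correct variant of simply replacing $\limsup$ by $\liminf$ in the part~(a) computation); the layer-cake/integration-by-parts representation of $E(\e^{-\lambda X})$ plus Laplace's method for $|\beta\us|^{1/\beta}\leq|\alpha\ur|^{1/\alpha}$; and the split estimate $E(\e^{-\lambda X})\leq P(X\leq\eps)+\e^{-\lambda\eps}$ for the $\e^{H(\alpha)}$ bound. For that last step the paper couples $\lambda$ and $\eps$ so that $\e^{-\lambda\eps}$ decays at exactly the rate $-|\ls|$ and then uses $\liminf_\eps\eps^\beta\log(a_\eps+b_\eps)=\max(\liminf\eps^\beta\log a_\eps,\lim\eps^\beta\log b_\eps)$, whereas you insert a slack $c>1$ so that $\e^{-\lambda\eps}$ is exponentially negligible against the pointwise lower bound $L(\lambda)\geq\e^{(\lr-\delta)\lambda^\alpha}$; both are correct, and your algebraic identification of $\e^{H(\alpha)}$ checks out.

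The genuine gap is the sharpness of the upper bound in part~(b), which is part of the statement being proved. Your final sentence only declares that ``one engineers an $X$ whose Laplace transform oscillates\dots''; no construction is given, and this is precisely where the substantive work lies. The paper's example is a discrete random variable supported on a geometric grid $\eps_n=q^n$ with $P(X\leq\eps)=\exp(-|s|\eps_{n(\eps)-1}^{-\beta})$, for which one computes $\ls=s$ exactly and then must bound the Laplace transform \emph{from below} — by locating the index $n(\lambda)$ nearest the saddle point and controlling the single dominant term of the series — to obtain $\lr\geq-(1+\max(q,q^\beta))|s|^{\alpha/\beta}$. Note also that sharpness is achieved only in the limit $q\downarrow0$ over a family of examples, not by a single random variable, a point your sketch does not anticipate. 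Without some such explicit construction and a two-sided estimate on its Laplace transform, the claim that the constant $\e^{H(\alpha)}$ cannot be improved remains unproved. (The sharpness of the other bound is immediate, as you say, from any example where the full limits of Theorem~1 exist.)
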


Note that, because $X$ is positive, the expectation $E(\e^{-\lambda
  X})$ exists for all $\lambda\geq 0$ and is a number between 0 and~1.
Thus the values $\ur$, $\lr$, $\us$, and $\ls$ will all be negative.
Also it is easy to see that theorem~\ref{tauberlimits} does not
directly imply theorem~\ref{tauber}: If the limit~$s$ from
theorem~\ref{tauber} exists, then we get
\begin{equ}
  |\beta s|^{1/\beta}
  = |\alpha\ur|^{1/\alpha}
  \leq |\alpha\lr|^{1/\alpha}
  \leq |\beta s|^{1/\beta}
\end{equ}
\textit{i.e.}\ the limit~$r$ also exists and satisfies $|\alpha
r|^{1/\alpha} = |\beta s|^{1/\beta}$.  But if we assume that~$r$
exists, then theorem~\ref{tauberlimits} only gives
\begin{equ}
   |\alpha r|^{1/\alpha}
  = |\beta\us|^{1/\beta}
  \leq |\beta\ls|^{1/\beta}
  \leq |\e^{H(\alpha)}\alpha r|^{1/\alpha}
\end{equ}
and we cannot directly conclude that the limit~$s$ from theorem~\ref{tauber}
exists.

\begin{proof}
As in the proof of theorem~\ref{tauber}, it is enough to consider
the case $A=\R$.  Throughout the proof we will use the relations
$\beta/\alpha = \beta+1$ and $\alpha/\beta = 1 - \alpha$ without
further comment.

a) The estimate $|\beta \us|^{1/\beta} \geq |\alpha \ur|^{1/\alpha}$
follows from the exponential Markov inequality: Let $\eps>0$.  From
\begin{equ}
  E(\e^{-\lambda X})
    \geq \e^{-\lambda\eps}
      P\bigl(\e^{-\lambda X}\geq\e^{-\lambda\eps}\bigr)
    = \e^{-\lambda\eps} P\bigl(X\leq\eps\bigr)
\end{equ}
we get $P(X\leq\eps) \leq \e^{\lambda\eps} E(\e^{-\lambda X})$ and
thus
\begin{equ}
  \eps^\beta \log P(X\leq\eps)
    \leq \eps^\beta\bigl(\lambda\eps + \log E(\e^{-\lambda X})\bigr)
        \quad \mbox{for all $\lambda\geq0$.}
\end{equ}
For $\lambda = (-\frac{\beta}{\beta+1}\ur)^{\beta+1}
\eps^{-(\beta+1)}$ the bound becomes
\begin{equ}
  \eps^\beta \log P(X\leq\eps)
  \leq \bigl(-\frac{\beta}{\beta+1}\ur\bigr)^{\beta+1} + \bigl(-\frac{\beta}{\beta+1}\ur\bigr)^\beta \frac{1}{\lambda^\alpha} \log E(\e^{-\lambda X}).
\end{equ}
Taking upper limits we get
\begin{align*}
  \us & = \limsup_{\eps\downarrow0} \eps\cdot\log P(X\leq\eps) \\
    &\leq \bigl(-\frac{\beta}{\beta+1}\ur\bigr)^{\beta+1} + \bigl(-\frac{\beta}{\beta+1}\ur\bigr)^\beta \ur
    = - \frac{\beta^\beta}{(\beta+1)^{\beta+1}} |\ur|^{\beta+1}
\end{align*}
and the claim follows by solving this inequality for $|\beta
\us|^{1/\beta}$.
\smallbreak

A more careful analysis is necessary to prove $|\beta \us|^{1/\beta}
\leq |\alpha \ur|^{1/\alpha}$.  We can express $\ur$ via the lower
tails of $X$:
\begin{align*}
\ur &= \limsup_{\lambda\to\infty}
              \frac{1}{\lambda^\alpha} \log E(\e^{-\lambda X}) \\
  &= \limsup_{\lambda\to\infty}
              \frac{1}{\lambda^\alpha} \log
              \int_0^1 P(\e^{-\lambda X} \geq t) \,dt \\
  &= \limsup_{\eps\downarrow0}
              \eps \log \int_0^\infty P(X \leq \eps^{1/\alpha}u) \e^{-u} \,du.
\end{align*}
The definition of $\us$ gives that for every $\delta > 0$ there exists
an $E>0$, such that for every $\eta<E$ we have $P(X\leq\eta) \leq
\exp\bigl((\us+\delta)/\eta^\beta\bigr)$.  Using this estimate and the
substitution $v = \eps u$ we find
\begin{equ}
  \ur \leq \limsup_{\eps\downarrow0} \eps \log
        \int_0^\infty \exp\bigl( - \frac{(|\us|-\delta)v^{-\beta}+v}{\eps} \bigr)
		\, \frac{1}{\eps} dv.
\end{equ}
The right-hand side can be evaluated by the Laplace principle again
and so we find
\begin{equ}
  \ur
  \leq -\essinf_{v\geq 0} \bigl( (|\us|-\delta)v^{-\beta}+v \bigr)
  = -\bigl(|\us|-\delta\bigr)^{1/(\beta+1)} \beta^{-\beta/(\beta+1)} (1+\beta)
\end{equ}
for every $0 < \delta < |\us|$ and thus
\begin{equ}
  |\ur|
  \geq |\us|^{1/(\beta+1)} \beta^{-\beta/(\beta+1)} (1+\beta)
  = |\us|^{\alpha/\beta} \frac{\beta^{\alpha/\beta}}{\alpha}.
\end{equ}
This completes the proof of the bound $|\alpha\ur|^{1/\alpha} \geq
|\beta\us|^{1/\beta}$.
\smallbreak

b) Replacing all upper limits with lower limits in the proof of
$|\beta \us|^{1/\beta} \geq |\alpha \ur|^{1/\alpha}$ gives the
corresponding bound $|\beta \ls|^{1/\beta} \geq |\alpha
\lr|^{1/\alpha}$.
\smallbreak

Finally, we prove $|\beta \ls|^{1/\beta} \leq | \e^{H(\alpha)}
\alpha \lr|^{1/\alpha}$, or equivalently $\lr \leq -
|\ls|^{1-\alpha}$:  Using the estimate $\e^{-\lambda x} \leq
\ind{[0,\eps]}(x) + \e^{-\lambda\eps} \ind{(\eps,\infty)}(x)$ for all
$x\geq 0$ gives $E\bigl(\e^{-\lambda X}\bigr) \leq P(X\leq\eps) +
\e^{-\lambda\eps}$.  Choosing $\eps = \eps(\lambda)$ such that
$1/\lambda^\alpha = |\ls|^{-\alpha}\eps^\beta$, we get
\begin{equ}
  \frac{1}{\lambda^\alpha} \log E\bigl(\e^{-\lambda X}\bigr)
  \leq |\ls|^{-\alpha}
        \eps^\beta \log\bigl(P(X\leq\eps) + \e^{-|\ls|\eps^{-\beta}}\bigr).
\end{equ}
For the second term in the sum, the limit $\lim_{\eps\downarrow0}
\eps^\beta \log \e^{-|\ls|\eps^{-\beta}} = -|\ls|$ exists and thus we
can conclude
\begin{align*}
\lr
  &\leq |\ls|^{-\alpha} \liminf_{\eps\downarrow0}
        \eps^\beta \log\Bigl(P(X\leq\eps) + \e^{-|\ls|\eps^{-\beta}}\Bigr) \\
  &= |\ls|^{-\alpha}
        \max\Bigl( \liminf_{\eps\downarrow0} \eps^\beta \log P(X\leq\eps) \,,\;
         \lim_{\eps\downarrow0} \eps^\beta \log \e^{-|\ls|\eps^{-\beta}} \Bigr) \\
  &= |\ls|^{-\alpha} \max\bigl(-|\ls| \,,\; -|\ls|\bigr)
   = -|\ls|^{1-\alpha}.
\end{align*}
This is the required result.

The lower bound on $|\ls|$ is sharp, because in the case of
theorem~\ref{tauber} we have equality there.  The fact that the upper
bound on~$|\ls|$ is sharp is shown by the following example.
\end{proof}

\textbf{Example.}  This example illustrates that the bound $|\beta
\ls|^{1/\beta} \leq |\e^{H(\alpha)} \alpha \lr|^{1/\alpha}$ is sharp.
Let $s<0$, $\alpha$ and $\beta$ as above, and $(\eps_n)_{n\in\N_0}$ be
a strictly decreasing sequence with $\eps_0=\infty$ and
$\lim_{n\to\infty} \eps_n = 0$.  Then we have
\begin{equ}
  \sum_{n=1}^\infty \Bigl(\e^{-|s|\eps_{n-1}^{-\beta}}-\e^{-|s|\eps_n^{-\beta}}\Bigr)
    = \e^{-|s|\eps_0^{-\beta}} - \lim_{n\to\infty}\e^{-|s|\eps_n^{-\beta}}
    = 1 - 0
    = 1
\end{equ}
and we can define a random variable~$X$ with values in the set $\{\,
\eps_n \mid n\in\N \,\}$ by
\begin{equ}
  P(X=\eps_n) = \e^{-|s|\eps_{n-1}^{-\beta}}-\e^{-|s|\eps_n^{-\beta}}
\end{equ}
for all $n\in\N$.  This random variable has
\begin{equ}
  P(X\leq \eps)
    = \sum_{n=n(\eps)}^\infty
        \Bigl(\e^{-|s|\eps_{n-1}^{-\beta}}-\e^{-|s|\eps_n^{-\beta}}\Bigr)
    = \e^{-|s|\eps_{n(\eps)-1}^{-\beta}}
\end{equ}
with $n(\eps)=\min \{\, n\in\N \mid \eps_n\leq\eps \,\}$ and consequently
\begin{equ}
  \eps^\beta \log P(X\leq \eps) = -|s|\frac{\eps^\beta}{\eps_{n(\eps)-1}^\beta}.
\end{equ}
By definition of $n(\eps)$ we have $\eps_{n(\eps)} \leq \eps <
\eps_{n(\eps)-1}$.  This allows us to calculate the exponential tail
rates $\ls=s$ and, because $s$ is negative,
$\us=s\cdot\liminf_{n\to\infty} \eps_n^\beta/\eps_{n-1}^\beta$.

Choosing different sequences $(\eps_n)$ leads to different values for
$\us$, $\ur$, and $\lr$.  For our example let $q<1$ and define
$\eps_n=q^n$ for all $n\in\N$.  Then the above calculation shows
$\us=qs$ and $\ls=s$.  Theorem~\ref{tauberlimits} gives $\ur = -|\beta
qs|^{\alpha/\beta}/\alpha$ and $\lr \in \bigl[ -|\beta
s|^{\alpha/\beta}/\alpha, -\e^{-H(\alpha)} |\beta
s|^{\alpha/\beta}/\alpha\bigr] = \bigl[ -
\e^{H(\alpha)}|s|^{\alpha/\beta}, - |s|^{\alpha/\beta}\bigr]$.  In
order to show that the upper bound on $|\ls|$ is sharp, we have to
show that we can have $\lr$ arbitrarily close
to~$-|s|^{\alpha/\beta}$.

In the situation of the example we can get better bounds on~$\lr$ by
an explicit calculation.  The Laplace transform of~$X$ is given by
\begin{align*}
E(\e^{-\lambda X})
  &= \sum_{n\in\N} \e^{-\lambda q^n}
        \bigl( \e^{-|s|q^{-\beta(n-1)}}-\e^{-|s|q^{-\beta n}} \bigr) \\
  &= \sum_{n\in\N} \e^{-\lambda q^n-|s|q^{-\beta(n-1)}}
        \bigl( 1-\e^{-|s|(1-q^\beta)q^{-\beta n}} \bigr).
\end{align*}
Since $\exp\bigl(-|s|(1-q^\beta)q^{-\beta n}\bigr)\to 0$ as
$n\to\infty$, we have $1/2 < 1 - \exp\bigl(-|s|(1-q^\beta)q^{-\beta
  n}\bigr) < 1$ for sufficiently large~$n$.  Define $n(\lambda)$ by
$q^{n(\lambda)} \in \bigl[ q (|s|/\lambda)^{\alpha/\beta},
(|s|/\lambda)^{\alpha/\beta} \bigr)$.  With $f(x)=\exp(-\lambda x -
q^\beta|s|x^{-\beta})$ we have
\begin{equ}
E(\e^{-\lambda X})
  > \exp\bigl( -\lambda q^{n(\lambda)}
                        - |s|q^{-\beta(n(\lambda)-1)} \bigr) \frac{1}{2}
  = \frac{1}{2} f(q^{n(\lambda)})
\end{equ}
for sufficiently large $\lambda$.  Because the only local extremum of
$f$ is a local maximum, we can get a lower bound for $f$ on the
interval $\bigl[ q (|s|/\lambda)^{\alpha/\beta},
(|s|/\lambda)^{\alpha/\beta} \bigr)$ by just considering the boundary
points.  This leads to
\begin{equ}
E(\e^{-\lambda X})
  > \frac{1}{2} \min\bigl(f(q (|s|/\lambda)^{\alpha/\beta}),
                                f((|s|/\lambda)^{\alpha/\beta}) \bigr)
  = \frac{1}{2} \exp\bigl( -(1+\max(q,q^\beta))\lambda^\alpha|s|^{\alpha/\beta} \bigr)
\end{equ}
for sufficiently large $\lambda$.  Taking lower limits we get
\begin{equ}
  \lr
  = \liminf_{\lambda\to\infty}
                  \frac{1}{\lambda^\alpha} \log E(\e^{-\lambda X})
  \geq -\bigl(1 + \max(q,q^\beta)\bigr)|s|^{\alpha/\beta}.
\end{equ}
By choosing small values of~$q$, we can force $\lr$ to be arbitrarily
close to $-|s|^{\alpha/\beta}$ and thus the bound from the theorem is
sharp.


\bigbreak

\textbf{Acknowledgements.} I want to thank the anonymous referees for
pointing out that my original proof for the case $\alpha=1/2$ and
$\beta=1$ could be changed to give the more general result presented
here, and also for pointing me to the references \citet{Ko04}
and~\citet{Bi08}.

\bibliographystyle{plainnat-jv}
\bibliography{tauber}

\end{document}